\numberwithin{equation}{section}
\numberwithin{figure}{section}
\titleformat{\subsection}[runin]
{\bfseries} {\thesubsection{.}}{0.15cm}{}[.]
\titleformat{\subsubsection}[runin]
{\em}{\thesubsubsection{.}}{0.15cm}{}[.]
\newtheorem{theorem}{Theorem}[section]
\newtheorem{lemma}[theorem]{Lemma}
\theoremstyle{definition}
\newtheorem{remark}[theorem]{Remark}
\newcommand\B{\mathbb{B}}
\newcommand\C{\mathbb{C}}
\newcommand\N{\mathbb{N}}
\newcommand\igot{\mathfrak{i}}
\renewcommand\igot{\mathfrak{i}}
\renewcommand\imath{\igot}
\newcommand\wt{\widetilde}
\newcommand\di{\partial}
\newcommand\dist{\mathrm{dist}}
\newcommand\Aut{\mathrm{Aut}}
\def\dist{\mathrm{dist}}
\def\Ell1{\mathrm{Ell_1}}
\def\CEll1{\mathrm{CEll_1}}
\begin{document}

\fancyhead[LO]{Holomorphic families of Fatou-Bieberbach domains and applications to Oka manifolds}
\fancyhead[RE]{F.\ Forstneri\v c, E. F. \ Wold} 
\fancyhead[RO,LE]{\thepage}

\thispagestyle{empty}


\begin{center}
{\bf\LARGE Holomorphic families of Fatou-Bieberbach domains and applications to Oka manifolds}

\vspace*{0.5cm}

{\large\bf  Franc Forstneri{\v c} and Erlend Forn\ae ss Wold} 
\end{center}

\vspace*{0.5cm}

{\small
\noindent {\bf Abstract}\hspace*{0.1cm}
We construct holomorphically varying families of Fatou-Bieberbach domains
with given centres in the complement of any compact polynomially convex subset $K$ of $\C^n$
for $n>1$. This provides a simple proof of the recent result of Y.\ Kusakabe to the effect 
that the complement $\C^n\setminus K$ of any polynomially convex subset $K$ of $\C^n$
is an Oka manifold. The analogous result is obtained with $\C^n$ replaced by 
any Stein manifold with the density property.
}

\noindent{\bf Keywords}\hspace*{0.1cm} Fatou-Bieberbach domain, polynomially convex set, Oka manifold 
\vspace*{0.1cm}

\noindent{\bf MSC (2010):}\hspace*{0.1cm}  32H02, 32M17, 32Q56
%
%

\noindent {\bf Date: \rm 4 Februar 2021}


%
%
\section{Introduction}\label{sec:intro}
A {\em Fatou-Bieberbach domain} in $\C^n$ is a proper subdomain $\Omega\subsetneq\C^n$
which is biholomorphic to $\C^n$. No such domains exists for $n=1$, but they are plentiful
for any $n>1$; see the survey of this topic in \cite[Chapter 4]{Forstneric2017E}.
In particular, the basin of attraction of an attracting fixed point of
a holomorphic automorphism of $\C^n$ (or in fact of any complex manifold) is 
biholomorphic to $\C^n$, cf.\ \cite{RosayRudin1988} and \cite[Theorem 4.3.2]{Forstneric2017E}. 
Furthermore, for any compact polynomially convex set $K\subset \C^n$
$(n>1)$ and point $p\in \C^n\setminus K$ there is a Fatou-Bieberbach domain 
$\Omega\subset \C^n$ such that $p\in \Omega$ and $K\cap \Omega=\varnothing$;
this is a special case of \cite[Proposition 9]{ForstnericRitter2014} where the same is shown
with $p$ replaced by any compact convex set.

In this note we prove the following more general result in this direction.

%
%
\begin{theorem}\label{th:main}
Let $K$ be a compact polynomially convex set in $\C^n$ for some $n>1$, 
$L$ be a compact polynomially convex set in $\C^N$ for some $N\in\N$, 
and $f:U\to \C^n$ be a holomorphic map
on an open neighbourhood $U\subset \C^N$ of $L$ such that $f(z)\in \C^n\setminus K$
for all $z\in L$. Then there are an open neighbourhood $V\subset U$ of $L$
and a holomorphic map $F:V\times \C^n\to \C^n$ such that for every $z\in V$
we have that $F(z,0)=f(z)$ and the map $F(z,\cdotp):\C^n\to \C^n\setminus K$ is injective.
Hence, $\Omega_z:=\{F(z,\zeta):\zeta\in\C^n\}$ is a Fatou-Bieberbach domain 
in $\C^n\setminus K$ for each $z\in V$.
\end{theorem}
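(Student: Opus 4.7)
The plan is to construct $F$ as a limit of a sequence of holomorphic maps $F_k \colon V \times \C^n \to \C^n$ with $F_k(z,\cdot) \in \Aut(\C^n)$ for every $z \in V$, built by the parametric Andersén--Lempert scheme. As initialization, shrink $U$ to a small polynomially convex Stein neighbourhood $V$ of $L$ with $f(\overline V) \cap K = \varnothing$, let $B_r \subset \C^n$ denote the ball of radius $r$ around the origin, and set $F_0(z, \zeta) := f(z) + \zeta$. By compactness there is $r_0 > 0$ with $F_0(z, \overline{B_{r_0}}) \cap K = \varnothing$ for all $z \in V$.

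Inductively, I would construct parametric automorphisms $\Phi_{k+1}(z,\cdot) \in \Aut(\C^n)$, depending holomorphically on $z \in V$ and fixing $f(z)$, together with radii $r_k \nearrow \infty$, such that $F_{k+1}(z, \zeta) := \Phi_{k+1}(z, F_k(z, \zeta))$ satisfies $F_{k+1}(z, 0) = f(z)$, $F_{k+1}(z, \overline{B_{r_{k+1}}}) \cap K = \varnothing$ for every $z \in V$, and $\Phi_{k+1}(z,\cdot)$ is $\epsilon_k$-close to $\Id$ on $M_k(z) := F_k(z, \overline{B_{r_k}})$. With $\epsilon_k$ summable and small enough relative to the Jacobian of $F_k$ at $\zeta = 0$, the Cauchy limit $F(z,\zeta) := \lim_k F_k(z,\zeta)$ will exist holomorphically on $V \times \C^n$, satisfy $F(z,0) = f(z)$, be non-degenerate at $\zeta = 0$, and hence be injective on each fibre by the classical fact that a non-degenerate limit of elements of $\Aut(\C^n)$ is a biholomorphism onto a Runge Fatou--Bieberbach domain. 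The inclusion $F(z, \C^n) \subset \C^n \setminus K$ follows since $r_k \to \infty$ and the dodging condition passes to the limit.

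The heart of the argument is the inductive step. The sets $M_k(z)$ and $M_k'(z) := F_k(z, \overline{B_{r_{k+1}}})$ form holomorphic families of polynomially convex compact subsets of $\C^n$, with $M_k(z) \cap K = \varnothing$ and $M_k(z) \subset M_k'(z)$. I would first construct a holomorphic family of injective holomorphic maps $\psi_z \colon U_z \to \C^n$, defined on a neighbourhood of $M_k'(z)$, fixing $f(z)$, $\epsilon_k$-close to $\Id$ on $M_k(z)$, isotopic to $\Id$ through injective holomorphic maps on neighbourhoods of $M_k'(z)$, and satisfying $\psi_z(M_k'(z)) \cap K = \varnothing$. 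The parametric Andersén--Lempert approximation theorem (as in Forstneri\v c's monograph, building on the parametric versions due to Kutzschebauch) then yields a holomorphic family $\Phi_{k+1}(z, \cdot) \in \Aut(\C^n)$ uniformly close to $\psi_z$ on $M_k'(z)$, the fixed-point condition at $f(z)$ being maintained by a finite-codimensional adjustment.

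The principal obstacle is the parametric construction of such $\psi_z$. My plan is to exploit the polynomial convexity of $K$ and $M_k'(z)$ to produce, via parametric Oka--Weil approximation on the family $\{M_k'(z)\}_{z \in V} \subset V \times \C^n$, holomorphic shear data separating the outer annulus $M_k'(z) \setminus M_k(z)$ from $K$, and then to realize the separation by composing a parametric family of shear-type time-one flows whose combined effect on $M_k(z)$ is small. The subtle point is that $K \cup M_k'(z)$ need not be polynomially convex, so one cannot directly invoke a global Runge approximation on the union; instead the polynomial convexity of each piece must be used separately, and the compatibility between pieces relies on the uniform positive distance between $M_k(z)$ and $K$ as $z$ varies over $V$. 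Once this parametric dodging lemma is established, the induction runs to completion and produces the desired $F$.
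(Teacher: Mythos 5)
Your overall architecture (a push-out limit of parametric automorphisms, convergence and fibrewise injectivity via the standard non-degenerate-limit argument) is sound, but the proof has a genuine gap exactly where you locate ``the heart of the argument'': the parametric dodging lemma producing the maps $\psi_z$ is not established, and the sketch offered (``shear data separating the outer annulus from $K$'', ``shear-type time-one flows'') is not an argument. To apply the parametric Anders\'en--Lempert theorem you need, holomorphically in $z$, an isotopy of injective holomorphic maps on a neighbourhood of $M_k'(z)$ that starts at the identity, stays close to the identity on $M_k(z)$, ends with image disjoint from $K$, and --- crucially --- has polynomially convex (Runge) images at \emph{every} time $t$; moreover the closeness to the identity on $M_k(z)$ must be quantitatively compatible with the distance of the displaced set from $K$, or else the dodging condition does not survive the passage to the limit (being disjoint from $K$ by an amount smaller than $\sum_{j\ge k}\epsilon_j$ is not enough; you need disjointness from a fixed neighbourhood of $K$). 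None of this is provided, and your own observation that $K\cup M_k'(z)$ need not be polynomially convex is precisely the obstruction: there is no Runge-type theorem letting you prescribe the map separately on $M_k'(z)$ and ``near $K$'' when the union is not polynomially convex, so the existence of the required isotopy is the whole content of the theorem in your formulation, not a routine step.

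It is instructive that the paper avoids this difficulty by dualizing the geometry. Working in $\C^{N+n}$ after the graph change of coordinates $\psi(z,\zeta)=(z,\zeta-f(z))$, it keeps a fixed small ball $L\times B$ around the zero section and pushes a \emph{decreasing} family of polynomially convex neighbourhoods $S_i=\psi(L\times K_i)$ of $\psi(L\times K)$ off to infinity using the fibrewise dilations $\theta_t(z,\zeta)=(z,t\zeta)$. Because $\theta_t$ is a global automorphism and $(L\times B)\cup\theta_t(S_1)\subset\theta_t((L\times B)\cup S_1)$, polynomial convexity of all the relevant unions along the isotopy is automatic from the elementary fact that replacing one piece of a polynomially convex disjoint union by a polynomially convex subset preserves polynomial convexity; the shrinking $S_{i+1}\subset S_i$ absorbs the approximation errors. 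The Fatou--Bieberbach maps are then the inverses of the limit of the push-out, and the set $\psi(L\times K)$ lies outside the basin because it has been sent to infinity. If you want to salvage your forward construction, you would essentially have to prove a lemma equivalent to the paper's Lemma~\ref{lem:main}; as written, your induction cannot start.
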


A proof of this result based solely on Anders\'en-Lempert theory is given in Section \ref{sec:proof};
it also applies if $\C^N$ is replaced by an arbitrary Stein manifold, and also to variable
fibres $K_z\subset \C^n$, $z\in L$, with polynomially convex graph (see Remark \ref{rem:variable}). 
For a convex parameter space $L\subset \C^N$ we prove the analogous result with $\C^n$ 
replaced by an arbitrary Stein manifold having the density property; see Theorem \ref{th:main2}. 

These two theorems immediately imply 
the following recent and very interesting result of Yuta Kusakabe.

%
%
\begin{theorem}{\rm (Kusakabe, \cite[Theorem 1.2 and Corollary 1.3]{Kusakabe2020complements}.)} \label{th:Oka}
For any compact holomorphically convex subset $K$ in a Stein manifold $Y$ with the density property
the complement $Y\setminus K$ is an Oka manifold. In particular, the complement $\C^n\setminus K$
of any compact polynomially convex set $K$ in $\C^n$ for $n>1$ is an Oka manifold. 
\end{theorem}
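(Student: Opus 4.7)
The plan is to deduce Theorem \ref{th:Oka} from Theorems \ref{th:main} and \ref{th:main2} by invoking a characterization of the Oka property, due to Kusakabe, that reformulates Oka-ness in terms of the existence of a \emph{local dominating holomorphic spray} along every holomorphic map from a neighborhood of a compact convex set. Concretely, it suffices to check the following: for every compact convex set $L\subset\C^N$ and every holomorphic map $f:U\to X$ on an open neighborhood $U\subset\C^N$ of $L$, there exist a smaller neighborhood $V\subset U$ of $L$, an integer $m\geq 0$, and a holomorphic map $F:V\times\C^m\to X$ with $F(\cdot,0)=f|_V$ and with $\di_\zeta F(z,0):\C^m\to T_{f(z)}X$ surjective for every $z\in V$. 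This is the criterion I would take as the gateway from our holomorphic Fatou-Bieberbach families to the Oka property.

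The first step is to verify this criterion for $X=Y\setminus K$ when $Y$ is a Stein manifold with the density property and $K\subset Y$ is compact $\Oscr(Y)$-convex. Given $f:U\to Y\setminus K$ on a neighborhood of a compact convex (hence polynomially convex) $L\subset\C^N$, I would apply Theorem \ref{th:main2} to produce a holomorphic family of Fatou-Bieberbach-type maps $F:V\times Y\to Y$ with $F(z,y_0)=f(z)$ at a chosen base point $y_0\in Y$, and with $F(z,\cdotp):Y\to Y\setminus K$ injective for every $z\in V$. Choosing a holomorphic chart of $Y$ centred at $y_0$ identifies a neighborhood of $y_0$ with a neighborhood of $0$ in $\C^n$, $n=\dim_\C Y$; restricting the second argument of $F$ to this chart delivers the required local dominating spray over $V$, with fiber in $\C^n$. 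The surjectivity of $\di_\zeta F(z,0)$ is forced by Osgood's theorem, since $F(z,\cdotp)$ is an injective holomorphic map between equidimensional complex manifolds and is therefore a local biholomorphism at $y_0$.

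The second assertion of Theorem \ref{th:Oka}, concerning $\C^n\setminus K$ for $K\subset\C^n$ polynomially convex with $n>1$, is either a special case of the above or, more directly, follows by applying Theorem \ref{th:main}, which already provides the local dominating spray in the form $F:V\times\C^n\to\C^n\setminus K$ with no chart needed. The part of the argument I expect to be the main obstacle is precisely the appeal to Kusakabe's local-spray characterization of the Oka property: this is the substantive bridge that makes the deduction from the Fatou-Bieberbach family ``immediate''. Absent this characterization, one would be forced into a direct approximation argument, uniformly approximating $f|_L$ by entire maps $\C^N\to Y\setminus K$; this is substantially harder because $Y\setminus K$ is typically non-Stein, so neither Cartan's theorems nor the Oka-Weil theorem can be applied on the target to splice together a global approximating map from the local spray data.
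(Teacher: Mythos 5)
Your proposal is correct and takes essentially the same route as the paper: both deduce Theorem \ref{th:Oka} from Theorems \ref{th:main} and \ref{th:main2} via Kusakabe's local dominating spray criterion (condition ($*$)), with domination supplied by the injectivity of the equidimensional Fatou--Bieberbach maps, a point the paper leaves implicit and you rightly make explicit. The only (harmless) deviation is that you describe the output of Theorem \ref{th:main2} as a family of injective self-maps of $Y$ requiring a chart at a base point, whereas the theorem already delivers the spray in the form $F:V\times\C^n\to X$ with $F(z,0)=f(z)$, so no chart is needed.
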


This is the first result in the literature which gives a large class of 
Oka domains in $\C^n$ for any $n>1$, and it provides an affirmative answer to a long-standing problem. 
As  noted in \cite[Corollary 1.4]{Kusakabe2020complements},
it follows from Theorem \ref{th:Oka} and \cite[Theorem 1.1]{Forstneric2017Indam} 
that for any compact polynomially convex set $K$ in $\C^n$ $(n>1)$, the complement $\C^n\setminus K$ 
(like any $n$-dimensional Oka manifold) is the image of a strongly dominating 
holomorphic map $\C^n\to\C^n\setminus K$.

Recall that a complex manifold $Y$ is said to be an {\em Oka manifold} if every holomorphic map 
from a neigbourhood of a compact (geometrically) convex set $L$ in a Euclidean space $\C^N$ 
into $Y$ is a uniform limit on $L$ of entire maps $\C^N \to Y$ 
(see \cite[Definition 5.4.1]{Forstneric2017E}; this is also called the 
{\em convex approximation property} and denoted CAP). By \cite[Theorem 5.4.4]{Forstneric2017E},
holomorphic maps $S\to Y$ from any reduced Stein space $S$ to an Oka manifold $Y$
satisfy all natural Oka-type properties. In his recent paper \cite{Kusakabe2018Oka},
Kusakabe showed that a complex manifold $Y$ is Oka if (and only if) is satisfies
the following condition:

($*$) For any compact convex set $L\subset \C^N$, open set $U\subset \C^N$ containing $L$,  
and holomorphic map $f:U\to Y$ there are an open set $V$ with $L\subset V\subset U$ and 
a holomorphic map $F:V\times \C^N\to Y$ with $F(\cdotp,0)=f|_V$ such that
\[
	\frac{\di}{\di \zeta}\Big|_{\zeta=0} F(z,\zeta):\C^N\to T_{f(z)}Y
	\ \ \text{is surjective for every}\ \ z\in V.
\]
A map $F$ with these properties is called a {\em dominating holomorphic spray over $f|_V$}.
This is a restricted version of condition Ell$_1$ introduced by Gromov \cite[p.\ 72]{Gromov1986}
(see also \cite{Gromov1989}). In \cite{Kusakabe2018Oka}, Kusakabe used the technique of
gluing sprays from \cite[Sect.\ 5.9]{Forstneric2017E} to show that this condition implies CAP,
so $Y$ is an Oka manifold. Conversely, it has been known before that 
every Oka manifold satisfies condition $\Ell1$ (see \cite[Corollary 8.8.7]{Forstneric2017E}).

Theorem \ref{th:main} provides a very special dominating spray with values in 
$\C^n\setminus K$ over any given holomorphic map $f:L\to \C^n\setminus K$, thereby proving 
Theorem \ref{th:Oka} in the case $Y=\C^n$. In exactly the same way, Theorem \ref{th:main2} implies the
general case of Theorem \ref{th:Oka}. 

Kusakabe also proved in \cite[Theorem 4.2]{Kusakabe2020complements} that certain closed 
noncompact sets in Stein manifolds $Y$ with the density property have Oka complements.
He constructed a holomorphically varying family $f(z)\in \Omega_z \subset Y\setminus K$ 
$(z\in L)$ of nonautonomous basins with uniform bounds 
(i.e., basins of random sequences of automorphisms of $Y$ which are uniformly attracting 
at $f(z)\in Y \setminus K$); these are elliptic manifolds as shown by 
Forn\ae ss and Wold \cite{FornaessWold2016}, hence Oka. When $Y=\C^n$,
the domains $\Omega_z$ can be chosen Fatou-Bieberbach domains by using 
Theorem \ref{th:main} with variable fibres (cf.\ Remark \ref{rem:variable}).
Kusakabe's proof of \cite[Theorem 4.2]{Kusakabe2020complements}
can also be modified so as to provide a family of Fatou-Bieberbach domains in the general
situation under consideration.

%
%
\section{Proof of Theorem \ref{th:main}}\label{sec:proof}
We shall use some standard facts concerning polynomial convex sets; we
refer the reader to the monograph by E.\ L.\ Stout \cite{Stout2007}. 
Firstly, if $K_1,K_2\subset \C^n$ is a pair of disjoint compact sets such that $K_1\cup K_2$ is 
polynomially convex, then for any polynomially convex set $K'_1\subset K_1$ the union 
$K'_1\cup K_2$ is also polynomially convex. Secondly, 
every compact polynomially convex set $K\subset \C^n$ is 
the zero set of a nonnegative plurisubharmonic exhaustion function $\rho:\C^n\to[0,+\infty)$ 
which is strongly plurisubharmonic on $\C^n\setminus K$. Choosing a sequence
$c_1>c_2>\cdots>0$ with $\lim_{i\to\infty} c_i=0$ and setting
$K_i=\{\rho\le c_i\}\supset K$ yields a decreasing sequence of compact polynomially convex
sets with $K_{i+1}$ contained in the interior of $K_i$ for every $i\in\N$.

Let $f$, $K$ and $L$ be as in the theorem. We replace $L$ by a slightly bigger polynomially convex set 
(still denoted $L$) contained in $U$ and such that $f(z)\in \C^n\setminus K$ for all $z\in L$. 
Choose a sequence $K_i\supset K$ as above, with $K_1$ chosen close enough to $K$ such that 
$f(z)\in  \C^n\setminus K_1$ for every $z\in L$.
The compact set $L\times K_i \subset\C^{N+n}$ is polynomially convex for every $i\in\N$.
Applying the change of coordinates $\psi(z,\zeta)=(z,\zeta-f(z))$ replaces $f$ by the zero function, 
and for every $i\in\N$ the set 
\begin{equation}\label{eq:Si}
	S_i = \psi(L\times K_i) \subset L\times \C^n \subset \C^{N+n}
\end{equation}
is polynomially convex and does not intersect $\C^N\times \{0\}^n$.  
Hence, $(L\times \{0\}^n)\cup S_1$ is polynomially convex.
Therefore, there is a small closed ball $B\subset \C^n$ centred at $0\in\C^n$ 
such that $(L\times B)\cap S_1=\varnothing$ and $(L\times B)\cup S_1$ is polynomially convex.
Since $S_i\subset S_1$ is polynomially convex, it follows that 
$(L\times B)\cup S_i$ is polynomially convex for each $i\in\N$. 

The following lemma will be used in the inductive construction.

\begin{lemma}\label{lem:main}
(Assumptions as above.) 
Let $B'\subset \C^n$ be a closed ball centred at the origin with $B\subset (B')^\circ$.
Then, there are an open neighbourhood $U'\subset U$ of $L$ and a biholomorphic 
map $\Phi:U'\times \C^n\to U'\times \C^n$ of the form $\Phi(z,\zeta)=(z,\phi(z,\zeta))$ such that 
\begin{enumerate}[\rm (a)]
\item $\Phi$ approximates the identity map as closely as desired on $L\times B$ and 
$\Phi(z,0)=(z,0)$ for all $z\in U'$,
\item $\Phi(S_1) \cap (L\times B')=\varnothing$, and 
\item the set $\Phi(S_2) \cup (L\times B')$ is polynomially convex.
\end{enumerate}
\end{lemma}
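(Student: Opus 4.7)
The plan is to produce the biholomorphism $\Phi(z,\zeta)=(z,\phi(z,\zeta))$ by a parametric Anders\'en-Lempert argument: the goal is a holomorphic family $\phi(z,\cdot)\in\Aut(\C^n)$, defined for $z$ in a neighbourhood $U'$ of $L$, that is close to the identity on $B$, fixes the fibre-wise origin, and ``large-pushes'' each fibre $K_1-f(z)$ of $S_1$ out of $B'$.

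First I would construct a shear-like target map via Oka-Weil on the polynomially convex set $(L\times B)\cup S_1\subset\C^{N+n}$. Since its two pieces are disjoint, there is a polynomial $h(z,\zeta)$ with $h\approx 0$ on $L\times B$ and $h\approx 1$ on $S_1$, and after subtracting $h(z,0)$ we may assume $h(z,0)\equiv 0$. Fix a large vector $v\in\C^n$ and set
\[
\Psi(z,\zeta) := \bigl(z,\,\zeta + h(z,\zeta)\,v\bigr).
\]
On a Stein neighbourhood of $(L\times B)\cup S_1$, $\Psi$ is fibre-preserving holomorphic, fixes $L\times\{0\}$, is close to the identity near $L\times B$, and close to the translation $(z,\zeta)\mapsto(z,\zeta+v)$ near $S_1$; for $|v|$ large the images of the two pieces are well separated, so $\Psi$ is injective and $\Psi\bigl((L\times B)\cup S_1\bigr)$ is polynomially convex (a polynomial in the $\zeta$-variable alone separates the two image pieces).

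Next I would convert $\Psi$ into a fibre-preserving automorphism of $U'\times\C^n$ via the parametric Anders\'en-Lempert theorem, applied iteratively so as to avoid a single long isotopy. I would divide the push into many small increments $v/k,\,2v/k,\ldots,v$, and at each stage apply parametric Anders\'en-Lempert to approximate the incremental shear-like map by a fibre-preserving automorphism, verifying polynomial convexity of the pushed configuration at every stage (possible because each increment is a small perturbation in the same direction, preserving the separating polynomial from the previous stage). The slack given by $S_2\subset S_1$, which holds because $K_2\subset K_1^\circ$, is essential: by using $S_2$ in the role of the set to be controlled in (c) there is room to absorb the small approximation errors at each step. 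A final small fibre-preserving translation $(z,\zeta)\mapsto(z,\zeta-\Phi(z,0))$ restores $\Phi(z,0)=(z,0)$ exactly, completing (a).

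Property (b) then follows because $\Phi(S_1)$ lies close to $S_1+(0,v)$ and is therefore disjoint from $L\times B'$ once $|v|$ is large; property (c) follows because $\Phi(S_2)$ lies close to $S_2+(0,v)$, and together with $L\times B'$ it is polynomially convex (a polynomial in the $\zeta$-variable alone separates the two pieces, and each is individually polynomially convex). The main technical obstacle is the parametric Anders\'en-Lempert step: one must ensure that the approximating automorphism respects the fibre structure over the Stein parameter space $L$, keeps the fibre-wise origin fixed, and that the iterative push preserves polynomial convexity of the intermediate configurations. This is where the slack between $S_1$ and $S_2$, together with a careful choice of the direction of $v$, must be exploited.
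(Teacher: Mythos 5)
Your overall strategy---build a fibre-preserving map that is near the identity on $L\times B$, fixes the fibrewise origin, pushes $S_1$ far away, and then realize it by the parametric Anders\'en--Lempert theorem---is the right one, but the push you choose (a translation by a large vector $v$, implemented as a shear $\zeta\mapsto\zeta+h(z,\zeta)v$) creates a gap at exactly the point where the argument has to work. The Anders\'en--Lempert theorem does not approximate a single injective map; it approximates an \emph{isotopy} of injective maps starting at the identity, under the hypothesis that the image of the compact set is polynomially convex \emph{at every time} $t$. For your translation isotopy the intermediate configurations are $(L\times B)\cup\bigl(S_1+(0,tv)\bigr)$, and there is no reason these should be polynomially convex --- indeed they need not even be disjoint: the fibre of $S_1$ over $z$ is $K_1-f(z)$, which merely avoids the origin, and for a suitable $K_1$ the partial translates $(K_1-f(z))+tv$ sweep across the origin (hence across $B$) for intermediate $t$, in every direction $v$. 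Your proposed fix --- subdividing into increments and ``preserving the separating polynomial from the previous stage'' --- does not start: at $t=0$ the only information is that $(L\times B)\cup S_1$ is polynomially convex, and no separating polynomial in the sense of Kallin's lemma is available, so there is nothing to propagate. (Your endpoint analysis is fine: for $|v|$ large a linear functional does separate $L\times B'$ from $S_2+(0,v)$ in the sense needed for Kallin. The problem is entirely the path.)

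The paper avoids this by pushing with the radial dilation $\theta_t(z,\zeta)=(z,t\zeta)$, $t\in[1,r]$, where $rB=B'$, rather than a translation. This makes the required polynomial convexity automatic: $(L\times B)\cup\theta_t(S_1)$ is contained in $\theta_t\bigl((L\times B)\cup S_1\bigr)=(L\times tB)\cup\theta_t(S_1)$, which is polynomially convex as the image of a polynomially convex set under a linear automorphism, and then the standard fact (a polynomially convex subset of one piece of a disjoint polynomially convex union may replace that piece) gives polynomial convexity of $(L\times B)\cup\theta_t(S_1)$ for every $t$. Moreover $\theta_t$ fixes $L\times\{0\}^n$ for all $t$, so the origin condition in (a) comes for free from the isotopy rather than needing your final corrective translation, and (b), (c) follow since $(L\times B')\cup\theta_r(S_1)=\theta_r((L\times B)\cup S_1)$ is polynomially convex and $\Phi(S_2)\subset\theta_r(S_1)$ once the approximation is close. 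If you replace your shear by this dilation isotopy, the rest of your argument goes through.
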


\begin{proof}
Choose $r>1$ such that $rB=B'$. Let $\theta_t(z,\zeta)=(z,t\zeta)$ for $z\in\C^N$,  
$\zeta\in\C^n$, and $t\in\C$.  We have that $(L\times B') \cap \theta_r(S_1) =\varnothing$ and 
\begin{equation}\label{eq:pc}
	(L\times B') \cup \theta_r(S_1) = \theta_r((L\times B)\cup S_1)
	\ \ \text{is polynomially convex}.
\end{equation}
Consider the isotopy of biholomorphic maps $\phi_t$ on a neighbourhood of 
$(L\times B)\cup S_1$ in $\C^{N+n}$ for $t\in [1,r]$ which equals the identity map on a neighbourhood 
of $L\times B$ and equals $\theta_t$ on a neighbourhood of $S_1$. Note that 
$\theta_t(S_1)$ is disjoint from $L\times B$ and the union 
$(L\times B) \cup \theta_t(S_1)$ is polynomially convex for all $t\in [1,r]$ 
(since it is contained in $\theta_t((L\times B) \cup S_1)=(L\times tB) \cup \theta_t(S_1)$ 
which is polynomially convex). Hence, by the parametric Anders\'en-Lempert theorem 
(see Kutzschebauch \cite{Kutzschebauch2005} or \cite[Theorem 4.12.3]{Forstneric2017E})
there is a holomorphic automorphism of $\C^{N+n}$ of the form $\Phi(z,\zeta)=(z,\phi(z,\zeta))$  
which approximates the identity map on $L\times B$, it agrees with the identity on $L\times \{0\}^n$, 
and it approximates $\theta_r$ on $S_1$. Hence, conditions (a) and
(b) in the lemma hold. Assuming that the approximations are close enough,
we have $\Phi(S_2)\subset \theta_r(S_1)$. Note that $\Phi(S_2)$ is polynomially
convex. In view of \eqref{eq:pc} it follows that $\Phi(S_2) \cup (L\times B')$ is polynomially convex
as well which gives condition (c).
\end{proof}

\begin{proof}[Proof of Theorem \ref{th:main}]
We apply the push-out method described in \cite[Section 4.4]{Forstneric2017E}.
Using Lemma \ref{lem:main} we inductively construct a decreasing sequence of open neighborhoods 
$U_k$ of $L$ and holomorphic automorphisms $\Phi_k(z,\zeta)=(z,\phi_k(z,\zeta))$ 
of $U_k\times \C^n$ such that, setting 
\[
	\Phi^k=\Phi_k\circ\Phi_{k-1}\circ \cdots \circ\Phi_1:U_k\times \C^n\to U_k\times \C^n,
\] 
the following conditions hold for every $k\in \N$.
\begin{enumerate}[\rm (i)]
\item $\Phi_k$ approximates the identity map as closely as desired on $L\times kB$ and
$\Phi_k(z,0)=(z,0)$ for all $z\in U_k$.
\item $\Phi^k(S_{k}) \cap (L\times (k+1)B)=\varnothing$. (Here, $S_k$ is given by \eqref{eq:Si}.)
\item The set $\Phi^k(S_{k+1}) \cup (L\times (k+1)B)$ is polynomially convex.
\end{enumerate}
Indeed, Lemma \ref{lem:main} furnishes the first map $\Phi_1$ with $B'=2B$ and the sets $S_2\subset S_1$; 
every subsequent step is of the same form by just increasing the indices.
Assuming that the approximations are close enough, 
\cite[Proposition 4.4.1 and Corollary 4.4.2]{Forstneric2017E} show that 
the limit $\Phi=\lim_{k\to\infty }\Phi^k$ exists uniformly on compacts on the domain 
\[
	\Omega=\bigl\{(z,\zeta)\in L\times \C^n: \Phi^k(z,\zeta)\ \text{is a bounded sequence}\bigr\}
	= \bigcup_{k=1}^\infty (\Phi^k)^{-1}(L\times kB), 
\]
and for every $z\in L$, $\Phi(z,\cdotp)$ maps the fibre $\Omega_z=\{\zeta\in \C^n:(z,\zeta)\in\Omega\}$
biholomorphically onto $\C^n$. By condition (ii) the set $S=\psi(L\times K)$ does not intersect 
$\Omega$ (it has been pushed to infinity by the sequence $\Phi^k$). 
Hence, the inverse map $\Phi^{-1}(z,\zeta) =(z,\varphi(z,\zeta))$ provides
a holomorphic family of Fatou-Bieberbach maps $\varphi(z,\cdotp):\C^n\to \C^n$
$(z\in L)$ such that $\varphi(z,0)=0$ and its image does not intersect the set 
$K-f(z)$. The function $F(z,\zeta)=\varphi(z,\zeta)+f(z)$ for $z\in L$ and $\zeta\in\C^n$
satisfies the conclusion of the theorem.
\end{proof}

\begin{remark}\label{rem:variable}
The above proof also applies in the case when the product $L\times K$ is replaced
by a compact polynomially convex set $\wt K\subset \C^{N+n}$ projecting onto $L$ 
whose fibres $K_z$ $(z\in L)$ depend on $z$. The conclusion remains the same,
that is, given a holomorphic map $f:L\to\C^n$ with $f(z)\in\C^n\setminus K_z$
for all $z\in L$, there is a holomorphically variable family of Fatou-Bieberbach
domains $f(z)\in \Omega_z\subset \C^n\setminus K_z$ for all $z\in L$.
\end{remark}

%
%
\section{Fatou-Bieberbach domains in Stein manifolds with the density property}

In this section we give a version of Theorem \ref{th:main} with $\mathbb C^n$ replaced
by an arbitrary Stein manifold with the density property. (See Varolin \cite{Varolin2001} 
or \cite[Definition 4.10.1]{Forstneric2017E} for this notion.) Every such manifold has dimension 
$>1$. The following result is similar to Theorem \ref{th:main}, but we impose 
the extra condition that the set $L$ is geometrically convex.  

\begin{theorem}\label{th:main2}
Let $X$ be a Stein manifold with the density property, $K$ be a compact holomorphically convex set 
in $X$, $L$ be a compact convex set in $\C^N$ for some $N\in\N$, and $f:U\to X$ be a  
holomorphic map on an open neighbourhood $U\subset \C^N$ of $L$ such that $f(z)\in X\setminus K$
for all $z\in L$. Then there are a neighbourhood $V\subset U$ of $L$
and a holomorphic map $F:V\times \C^n\to X$ with $n=\dim X$ such that for every $z\in V$
we have that $F(z,0)=f(z)$ and the map $F(z,\cdotp):\C^n\to X\setminus K$ is injective.
\end{theorem}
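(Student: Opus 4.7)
The plan is to prove Theorem \ref{th:main2} by constructing a holomorphically varying family of non-autonomous basins of attraction at the centres $f(z)$, lying in $X\setminus K$. Unlike Theorem \ref{th:main}, a direct push-out in $V\times X$ would produce fibres biholomorphic to $X$ rather than to $\C^n$, so one must arrange the iterates $\Phi^k=\Phi_k\circ\cdots\circ\Phi_1$ to be fibrewise \emph{attracting} at $\Gamma_f=\{(z,f(z)):z\in L\}$; the non-autonomous Fatou-Bieberbach theorem (the basin of a uniform attractor is biholomorphic to $\C^n$; cf.\ Forn\ae ss-Wold \cite{FornaessWold2016}) then delivers fibres biholomorphic to $\C^n$.

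After slightly enlarging $L$ within $U$ and choosing an $\Oscr(X)$-convex decreasing sequence $K_i\supset K$ with $K_{i+1}\subset K_i^\circ$, $\bigcap_i K_i=K$, and $f(z)\notin K_1$ for $z\in L$, I set $\wt S_i:=L\times K_i\subset \C^N\times X$. Each $\wt S_i$ is $\Oscr(\C^N\times X)$-convex, as is the Stein submanifold $\Gamma_f$, and $\Gamma_f\cap \wt S_1=\varnothing$. The convexity of $L$ enters at a single place: since $L$ has a Stein contractible neighbourhood basis, $f^*TX$ trivialises holomorphically over a Stein neighbourhood $W\subset U$ of $L$. The Docquier-Grauert tubular neighbourhood theorem then provides an open neighbourhood $\Ncal$ of $\Gamma_f$ in $W\times X$, an open ball $D\subset \C^n$ centred at the origin (with $n=\dim X$), and a fibre-preserving biholomorphism $\Psi:\Ncal\to W\times D$ with $\Psi(z,f(z))=(z,0)$; this $\Psi$ replaces the affine shift of Section \ref{sec:proof}.

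At stage $k$, I choose a small closed ball $\overline B\subset D$ and put $T:=\Psi^{-1}(L\times \overline B)$. For $\overline B$ small enough, $T\cup \wt S_k$ is $\Oscr(\C^N\times X)$-convex (by the standard polynomial convexity facts recalled at the start of Section \ref{sec:proof}, transferred to $\C^N\times X$). I then define a fibre-preserving isotopy on a neighbourhood of $T\cup \wt S_k$ that in the coordinates of $\Psi$ is the contraction $(z,\zeta)\mapsto(z,\zeta/2)$ on a neighbourhood of $T$, and acts as a fibre-preserving push away from $\Gamma_f$ on a neighbourhood of $\wt S_k$ (as in Lemma \ref{lem:main}). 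Parametric Andersén-Lempert (\cite{Kutzschebauch2005}; \cite[Theorem 4.12.3]{Forstneric2017E}) approximates this isotopy by a fibre-preserving holomorphic automorphism $\Phi_k(z,x)=(z,\phi_k(z,x))$ of $\C^N\times X$ in a neighbourhood of $L$, fixing $\Gamma_f$ pointwise and with fibrewise differential at $f(z)$ close to $\tfrac{1}{2}\Id$ in the coordinates of $\Psi$.

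Taking $\Phi^k=\Phi_k\circ\cdots\circ\Phi_1$, the non-autonomous basin of attraction $\Omega_z\subset X$ of $f(z)$ under the sequence $\Phi_k(z,\cdot)$ is open, biholomorphic to $\C^n$ by the parametric non-autonomous Fatou-Bieberbach theorem (cf.\ Forn\ae ss-Wold \cite{FornaessWold2016} and Kusakabe \cite[Theorem 4.2]{Kusakabe2020complements}), depends holomorphically on $z\in V$ for a suitable neighbourhood $V\subset U$ of $L$, and avoids $K$ (since at stage $k$ the set $\wt S_k\supset L\times K$ is pushed outside the forward-invariant contracting tubes). The resulting biholomorphisms $\C^n\to\Omega_z$ assemble into the holomorphic map $F:V\times \C^n\to X$ with $F(z,0)=f(z)$ and $F(z,\cdot):\C^n\to X\setminus K$ injective. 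The main obstacle is the compatible control of the attraction rates and the push-out speeds, so that the parametric non-autonomous Fatou-Bieberbach theorem applies uniformly in $z$ and the basin $\Omega_z$ avoids $K$; the convexity of $L$ is used only to construct $\Psi$, and the density property of $X$ only through parametric Andersén-Lempert.
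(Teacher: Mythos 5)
Your route is genuinely different from the paper's: you build a parametric non-autonomous basin of attraction along the graph of $f$ while pushing $L\times K$ to infinity, which is essentially Kusakabe's construction from \cite[Theorem 4.2]{Kusakabe2020complements}. The paper instead reduces the parametric problem to a single point: Lemma \ref{fibaut} produces a fibred automorphism $\phi(z,x)=(z,\varphi(z,x))$ of $V\times X$ carrying $(z,f(z))$ to $(z,f(z_0))$ while moving points of $K$ by less than $\epsilon$, and the non-parametric Lemma \ref{fb} supplies a single Fatou-Bieberbach domain $\Omega$ of the first kind with $f(z_0)\in\Omega\subset X\setminus K'$; composing a fixed biholomorphism $\C^n\to\Omega$ with $\varphi(z,\cdot)^{-1}$ then gives $F$. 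This sidesteps all parametric basin technology.

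The genuine gap in your argument is the appeal to a ``(parametric) non-autonomous Fatou-Bieberbach theorem'' for uniform attractors. What \cite{FornaessWold2016} proves is that non-autonomous basins with uniform bounds are \emph{elliptic}, hence Oka; whether such basins are biholomorphic to $\C^n$ is precisely the open problem (Bedford's conjecture) that forces the extra care taken in this paper. A non-autonomous basin is known to be biholomorphic to $\C^n$ only under a rate condition such as $s\|\zeta\|\le\|F_j(\zeta)\|\le r\|\zeta\|$ on a fixed ball with $r^2<s<1$ (see \cite{Wold2005}, \cite{FornaessStensones2004}), which is exactly what Lemma \ref{fb} arranges by making each automorphism close to multiplication by $\tfrac12$ near the centre. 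You do note that the fibrewise differential can be made close to $\tfrac12\Id$, which would let you verify $r^2<s$ uniformly in $z$ on a fixed small ball, but you neither state this condition nor supply the parametric version of the tail-space argument that would yield a \emph{holomorphic} family of biholomorphisms $\C^n\to\Omega_z$; no off-the-shelf parametric statement of this kind exists in the references you cite, so this step needs an actual proof (the introduction of the paper remarks that Kusakabe's construction ``can be modified'' to give Fatou-Bieberbach fibres, but that modification is exactly the missing content). A secondary issue is the bookkeeping required to combine, in a single automorphism per step, the contraction at $\Gamma_f$ with the push-out of $\wt S_k$, while keeping the Andersén-Lempert errors compatible both with the rate condition and with the convergence of the basins; the paper's split into Lemmas \ref{fibaut} and \ref{fb} is designed precisely to avoid this.
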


Hence, $\Omega_z:=\{F(z,\zeta):\zeta\in\C^n\}\subset X\setminus K$ is a Fatou-Bieberbach domain 
of the first kind (i.e., biholomorphic to $\C^n$) for each $z\in V$.

The proof of Theorem \ref{th:main2} depends on the following interpolation result for graphs. 
We denote by $\dist_X$ a distance function on $X$ compatible with the manifold topology.

\begin{lemma}\label{fibaut}
Let $X,K,L,U$ and $f$ be as above, and let $z_0\in L$ be arbitrary. 
Then for any $\epsilon>0$ there exist a neighbourhood $V\subset U$ of $L$ and 
a fibred holomorphic automorphism $\phi(z,x)=(z,\varphi(z,x))$ of $V\times X$ such that 
$\phi(z,f(z)) = (z,f(z_0))$ for all $z\in V$, and $\dist_X (\varphi(z,x),x) <\epsilon$ for all $z\in L$ and $x\in K$. 
\end{lemma}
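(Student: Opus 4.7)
The plan is to apply the parametric Andersén--Lempert theorem for $X$ to a fibred isotopy that moves the graph of $f$ to the constant graph of $f(z_0)$ while being the identity near $L\times K$.

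First I would enlarge $L$ to a slightly larger compact convex set still contained in $U$ and choose a Stein neighbourhood $V_0\subset U$ of $L$ on which $f(V_0)\cap K=\varnothing$. In the Stein manifold $V_0\times X$ consider the two disjoint compact sets $A_1:=L\times K$ and $A_2:=\Gamma_f:=\{(z,f(z)):z\in L\}$. Each is $\mathcal{O}(V_0\times X)$-convex: $A_1$ as a product of an $\mathcal{O}(V_0)$-convex and an $\mathcal{O}(X)$-convex set, $A_2$ as the graph of a holomorphic map over the convex base $L$. Each of them has a basis of Stein neighbourhoods, and these can be chosen disjoint, so the union $A:=A_1\cup A_2$ also admits a Stein neighbourhood basis and is therefore $\mathcal{O}(V_0\times X)$-convex. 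The same argument applies to $A_t:=(L\times K)\cup\Gamma_{f_t}$ where $f_t(z):=f((1-t)z+tz_0)$; note that $f_t(z)\in X\setminus K$ for all $(z,t)\in L\times[0,1]$ by convexity of $L$ and continuity of $f$, after possibly shrinking $V_0$.

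Next I would construct a smooth isotopy of fibred injective holomorphic maps $\Theta_t$, $t\in[0,1]$, on a neighbourhood $W\subset V_0\times X$ of $A$ by setting $\Theta_t\equiv\mathrm{id}$ on a neighbourhood of $A_1$ and defining $\Theta_t(z,x)=(z,\tau_t^z(x))$ near $\Gamma_f$, where $\tau_t^z$ is a local biholomorphism of $X$ sending $f(z)$ to $f_t(z)$ and depending holomorphically on $z$ (e.g.\ a local translation in a holomorphic chart on $X$ along the path $s\mapsto f_s(z)$). Then $\Theta_0=\mathrm{id}$, $\Theta_1(z,f(z))=(z,f(z_0))$, and $\Theta_t(A)=A_t$ is $\mathcal{O}(V_0\times X)$-convex for every $t$. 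The parametric (fibred) Andersén--Lempert theorem for Stein manifolds with the density property (Kutzschebauch \cite{Kutzschebauch2005}, \cite[Thm.~4.10.5]{Forstneric2017E}) then produces a fibred holomorphic automorphism $(z,x)\mapsto(z,\wt\varphi(z,x))$ of $V\times X$, for some open neighbourhood $V\subset V_0$ of $L$, approximating $\Theta_1$ uniformly on $A$; hence $\dist_X(\wt\varphi(z,x),x)<\epsilon$ on $L\times K$ and $\wt\varphi(z,f(z))$ is arbitrarily close to $f(z_0)$ for $z\in L$.

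To replace the approximation along $\Gamma_f$ by exact interpolation, I would invoke the interpolation variant of the parametric Andersén--Lempert theorem, which permits prescribing values of the automorphism along an $\mathcal{O}(V\times X)$-convex submanifold such as $\Gamma_f$ (cf.\ \cite[Sect.~4.12]{Forstneric2017E}); alternatively, one can correct $\wt\varphi$ by a small fibred automorphism close to the identity that moves $\wt\varphi(z,f(z))$ exactly to $f(z_0)$ without disturbing $K$ beyond $\epsilon$, iterating a push-out if needed. The main obstacle is precisely this last step: upgrading uniform approximation on $\Gamma_f$ to exact interpolation $\varphi(z,f(z))=f(z_0)$ while retaining the $\epsilon$-proximity of $\varphi(z,\cdot)$ to the identity on $K$. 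The density property of $X$ is essential throughout, since it is what allows the fibred isotopy $\Theta_t$ to be realized by genuine fibred automorphisms of $V\times X$.
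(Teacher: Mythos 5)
Your overall strategy (move the graph of $f$ to the constant graph of $f(z_0)$ by a fibred isotopy that is the identity near $L\times K$, then realize it by the parametric Anders\'en--Lempert theorem) is the same as the paper's, and that part is fine. But the step you yourself flag as ``the main obstacle'' --- upgrading uniform approximation on $\Gamma_f$ to the exact identity $\varphi(z,f(z))=f(z_0)$ --- is a genuine gap, and neither of your two suggested fixes closes it. There is no off-the-shelf ``interpolation variant'' of the parametric Anders\'en--Lempert theorem that prescribes exact values along a positive-dimensional subvariety of a general Stein manifold with the density property; the interpolation results in \cite[Sect.~4.12]{Forstneric2017E} do not cover this. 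And your second suggestion, a ``small fibred automorphism close to the identity that moves $\wt\varphi(z,f(z))$ exactly to $f(z_0)$'', is precisely the object whose existence needs to be proved: you must produce it \emph{holomorphically in $z$}, which is the whole point. The paper does this concretely: using complete vector fields $V_1,\dots,V_m$ whose values span $T_xX$ everywhere, it forms the composed flow $\Psi(s,\cdot)=\psi_{m,s_m}\circ\cdots\circ\psi_{1,s_1}\in\Aut(X)$, restricts $s$ to an $n$-dimensional subspace on which $s'\mapsto\wt\Psi_{s'}(f(z_0))$ is a biholomorphism of a ball $B_\delta\subset\C^n$ onto a neighbourhood of $f(z_0)$, and then, once $\wt\varphi(z,f(z))$ lies in that neighbourhood, defines $\lambda(z)\in B_\delta$ by $\wt\Psi_{\lambda(z)}(f(z_0))=\wt\varphi(z,f(z))$ (holomorphic in $z$ by uniqueness) and sets $\varphi(z,x)=\wt\Psi_{\lambda(z)}^{-1}(\wt\varphi(z,x))$. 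Since $\lambda(z)$ is uniformly small, $\wt\Psi_{\lambda(z)}^{-1}$ is uniformly close to the identity, so the $\epsilon$-bound on $K$ survives. Without this (or an equivalent) transitivity construction your proof does not go through.

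A secondary, smaller flaw: your argument that $A_1\cup A_2$ is $\Ocal(V_0\times X)$-convex because the two sets ``have disjoint Stein neighbourhood bases'' is not valid --- possessing a Stein neighbourhood basis does not imply holomorphic convexity in the ambient manifold (the unit circle in $\C$ has a basis of Stein annular neighbourhoods but its $\Ocal(\C)$-hull is the closed disc). The conclusion is true here, but it needs the kind of argument recorded at the start of Section~\ref{sec:proof} of the paper (a disjoint union of a holomorphically convex set with a graph over a holomorphically convex base, lying in the complement, is again holomorphically convex), not the neighbourhood-basis reasoning.
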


\begin{proof}
We may assume that $z_0=0\in\C^N$. 
Let $V_1,\dots,V_m$ be complete holomorphic vector fields on $X$ such that $V_1(x),\ldots,V_m(x)$
span the tangent space $T_xX$ for all $x\in X$ (such exist  by \cite[Proposition 5.6.23]{Forstneric2017E}
since $X$ is Stein and has the density property). Let $\psi_{1,s},\dots,\psi_{m,s}$ denote their 
respective flows, $s\in \mathbb C$. Consider the map $\Psi:\C^m\times X\rightarrow X$ defined 
for $s=(s_1,\ldots,s_m)\in\C^m$ and $x\in X$ by 
\[
	\Psi(s_1,\dots,s_m,x) = \psi_{m,s_m}\circ\cdots\circ\psi_{1,s_1}(x). 
\]
Note that $\Psi(s,\cdotp)\in\Aut(X)$ for every $s\in\C^m$. 
Then the partial differential $\di_s|_{s=0}\Psi(s,f(0))$ has maximal rank $n=\dim X$, so there exists 
an $n$-dimensional linear subspace $\Lambda\subset\mathbb C^m$ on which this
differential has rank $n$. We may assume that $\Lambda=\C^n\times \{0\}^{m-n}$.
Write $s=(s',s'')$, with $s'\in\C^n$ and $s''\in \C^{m-n}$, and set $\wt \Psi_{s'}:= \Psi((s',0''),\cdotp) \in \Aut(X)$. 
It follows that there exists $\delta>0$ such that the map $s'\mapsto \wt \Psi_{s'}(f(0))$ 
is an embedding of the open $\delta$-ball $B_\delta \subset \C^n$ centred at $0\in\C^n$ 
onto an open neighbourhood of $f(0)\in X$.  

We replace $L$ by a slightly larger convex set $L'\subset U$ with $L\subset (L')^\circ$ without
changing the notation. We also choose a compact holomorphically convex  set $K'\subset X$
containing $K$ in its interior and such that $f(z)\in X\setminus K'$ for all $z\in L$.
Set $f_t(z)=f(t\cdot z)$ for $z\in L$ and $t\in [0,1]$. Consider the isotopy 
$\phi_t(z,x)$ defined to be the identity near $L\times K'$ and $\phi_t(z,f(z))=(z,f_{1-t}(z))$, $0\leq t\leq 1$
on the graph $Z:=\{(z,f(z)):z\in L\}\subset \C^N\times X$. The image of $\phi_t$ 
is the disjoint union of $L\times K'$ and the holomorphic graph 
of $f_{1-t}$ over $L$, so it is holomorphically convex in $\C^N\times X$. 
By using  \cite[Proposition 3.3.2]{Forstneric2017E}
(a fibred version of the tubular neighbourhood theorem for Stein manifolds)
along with the Oka-Grauert principle we can extend $\phi_t$ to a fibred isotopy of injective 
holomorphic maps on an open neighbourhood of $Z$ in $\C^N\times X$. 
Since $X$ has the density property, given $\eta>0$ there is 
a fibred holomorphic automorphism $\tilde\phi(z,x)=(z,\tilde\varphi(z,x))$ of $L\times X$
such that $\mathrm{dist}_X(\tilde\varphi(z,f_1(z)),f(0))<\eta$ for $z\in L$ and  
$\dist_X(\tilde\varphi(z,x),x)<\epsilon/2$ for $z\in L$ and $x\in K'$ (see \cite{Kutzschebauch2005} 
and \cite[Theorems 4.10.5 and 4.12.3]{Forstneric2017E}). 
Note that $f_1=f$. If $\eta>0$ is chosen small enough, 
there exists for each $z\in L$ a unique point $\lambda(z)\in B_\delta\subset\C^n$ such that 
$\wt \Psi_{\lambda(z)}(f(0))=\tilde\varphi(z,f(z))$. The fibred holomorphic automorphism
\[
	\phi(z,x) = \left(z,\wt\Psi_{\lambda(z)}^{-1}(\tilde\varphi(z,x))\right),\quad z\in L,\ x\in X
\]
then satisfies the lemma provided $\eta>0$ is chosen small enough.
\end{proof}

We will also need the following basic result which we include lacking a reference. (The existence 
of a Fatou-Bierberbach domain of the first kind containing a point $p\in X$ was proved by Varolin
\cite{Varolin2000}, but this is not sufficient for our purpose.) 

\begin{lemma}\label{fb}
Let $X$ be a Stein manifold with the density property, let $K\subset X$ be a holomorphically convex 
compact set, and let $p\in X\setminus K$. Then there exists a Fatou-Bieberbach domain 
$\Omega\subset X\setminus K$ of the first kind such that $p\in \Omega$. 
\end{lemma}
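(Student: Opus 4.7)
The plan is to realize $\Omega$ as a nonautonomous basin of attraction at $p$ of a sequence $\{\Phi_k\}\subset\Aut(X)$ constructed via push-out, paralleling the proof of Theorem \ref{th:main}. The role of the parametric Andersen-Lempert theorem on $\C^{N+n}$ is played by the Andersen-Lempert theorem on $X$, valid because $X$ has the density property (see \cite[Theorems 4.10.5 and 4.12.3]{Forstneric2017E}).

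First, I would use the density property to produce a base automorphism $\Phi_0\in\Aut(X)$ with $\Phi_0(p)=p$ and $d\Phi_0(p)=\tfrac{1}{2}\Id$ in a local chart at $p$, realized as a composition of time-$1$ flows of finitely many complete holomorphic vector fields spanning $T_pX$ (exactly as in the opening of the proof of Lemma \ref{fibaut}; cf.\ Varolin \cite{Varolin2001} and \cite[Proposition 5.6.23]{Forstneric2017E}). I would then choose a compact holomorphically convex neighborhood $B$ of $p$ with $\Phi_0(B)\subset B^\circ$, $B\cap K=\varnothing$, and $B\cup K$ holomorphically convex in $X$; the last condition is arranged via the nonnegative plurisubharmonic characterization of holomorphically convex sets recalled at the start of Section \ref{sec:proof}.

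Next, setting $K_1:=K$, I would inductively define $\Phi_k\in\Aut(X)$ as an Andersen-Lempert approximation on a neighborhood of $B\cup K_k$ of the biholomorphism that equals $\Phi_0$ near $B$ and the identity near $K_k$, and put $K_{k+1}:=\Phi_k(K_k)$. With summably small errors, $K_{k+1}$ stays a small perturbation of $K_k$, remains disjoint from $B$, and $B\cup K_{k+1}$ remains holomorphically convex. The compositions $\Phi^{(k)}:=\Phi_k\circ\cdots\circ\Phi_1$ then agree with $\Phi_0^k$ near $p$ up to uniformly small errors, while $\Phi^{(k)}(K)=K_{k+1}$ stays bounded away from $p$. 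By the push-out machinery \cite[Proposition 4.4.1 and Corollary 4.4.2]{Forstneric2017E} together with the Rosay-Rudin principle \cite{RosayRudin1988} (cf.\ \cite[Theorem 4.3.2]{Forstneric2017E}) applied to the uniformly contracting dynamics near $p$, the basin $\Omega:=\{x\in X:\Phi^{(k)}(x)\to p\}$ is biholomorphic to $\C^n$, contains $p$, and satisfies $\Omega\cap K=\varnothing$ since $\Phi^{(k)}(x)\not\to p$ for $x\in K$.

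The hard part will be the inductive bookkeeping: the Andersen-Lempert approximation errors must be chosen summably small so that at every step $K_k$ stays in a controlled neighborhood of $K$, the union $B\cup K_k$ remains holomorphically convex, and the near-$p$ dynamics of $\Phi^{(k)}$ supports a uniform Rosay-Rudin-type linearization identifying $\Omega$ with $\C^n$. These are standard summability estimates directly analogous to those verified in Lemma \ref{lem:main} and the proof of Theorem \ref{th:main}.
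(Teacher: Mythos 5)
Your overall strategy is the paper's: approximate, by global automorphisms of $X$, a map that contracts towards $p$ in a coordinate chart and is the identity near (an enlargement of) $K$, and take the basin of the resulting sequence. The construction of a base automorphism $\Phi_0$ with $d\Phi_0(p)=\tfrac12\Id$ is an unnecessary detour (the paper approximates the local model $z\mapsto z/2$ in the chart directly, without first globalizing it), but it is harmless. The genuine gap is at the final and most substantial step: the identification of the basin with $\C^n$. Neither of the tools you cite does this job. The push-out results \cite[Proposition 4.4.1 and Corollary 4.4.2]{Forstneric2017E} apply when each $\Phi_k$ is close to the \emph{identity} on a sequence of compacts that \emph{exhausts} the target (in Theorem \ref{th:main} these are the sets $L\times kB$ exhausting $L\times\C^n$); here your $\Phi_k$ are close to a fixed contraction $\Phi_0$ on a \emph{fixed} small ball $B$ around $p$, so $\Omega=\bigcup_k(\Phi^{(k)})^{-1}(B)$ is an increasing union of embedded balls and nothing in Section 4.4 identifies it with $\C^n$. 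The Rosay--Rudin theorem identifies the basin of an attracting fixed point of a \emph{single} automorphism with $\C^n$; your sequence $\Phi^{(k)}$ is not the iteration of a single map, and the basin of a sequence that merely approximates $\Phi_0$ near $p$ need not coincide with the basin of $\Phi_0$. Whether a general nonautonomous basin with uniform attraction is biholomorphic to $\C^n$ is an open problem (Bedford's conjecture), so this step cannot be waved through as ``standard summability estimates.''

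What is actually needed, and what the paper invokes, is the theorem on nonautonomous basins with uniform bounds: if the local representatives $F_j$ of the automorphisms satisfy $s\|z\|\le\|F_j(z)\|\le r\|z\|$ on a fixed ball with $0<s<r<1$ and $r^2<s$, then the associated tail space (abstract basin) is biholomorphic to $\C^n$ and is biholomorphic to the basin of the sequence; see \cite{FornaessStensones2004} and \cite[proof of Theorem 4]{Wold2005}. Your construction can be made to satisfy these hypotheses, since each $\Phi_k$ is $C^1$-close to $\Phi_0$ near $p$ and $d\Phi_0(p)=\tfrac12\Id$ forces two-sided bounds with $s,r$ near $\tfrac12$, hence $r^2<s$. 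So the gap is repairable, but you must record these quantitative bounds explicitly and cite the tail-space theorem rather than the push-out method and Rosay--Rudin; as written, the proof of the key claim $\Omega\cong\C^n$ is not justified.
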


\begin{proof}
Let $K'$ be a holomorphically convex compact set in $X$ containing $K$ in its interior 
and such that $p\notin K'$.  Choose a local holomorphic coordinate $\phi:U_p\rightarrow\mathbb C^n$ on $X$
such that $p\in U_p\subset X\setminus K'$ and $\phi(p)=0$. 
Denote by $\B_\delta(0)$ the ball of radius $\delta$ centred at the origin in $\C^n$.
Let $\delta>0$ be chosen small enough such that $\overline{\B_\delta(0)} \subset  \phi(U_p)$ and
$\phi^{-1}(\overline{\B_\delta(0)})\cup K'$ is holomorphically convex in $X$. 
Let $F:\C^n\to\C^n$ be the automorphism $F(z_1,...z_n)=(\frac{z_1}{2},...,\frac{z_n}{2})$. 
Since $X$ has the density property, we can apply \cite[Theorem 4.10.5]{Forstneric2017E} to approximate 
the map which equals $\phi^{-1}\circ F\circ\phi$ on a neighbourhood of $\phi^{-1}(\overline{\B_\delta(0)})$ 
and equals the identity map on a neighbourhood of $K'$ 
to obtain a sequence $G_j\in\mathrm{Aut}(X)$ such that for any $k\in\mathbb N$ we have that 
$G_k\circ\cdots \circ G_1(K)\subset K'$, and setting $F_j=\phi\circ G_j\circ\phi^{-1}$ we have that 
\[
	s\cdot\|z\|\leq \|F_j(z)\|\leq r\cdot\|z\|,\quad j\in\N
\]
on $\B_\delta(0)$, with $0<s<\frac12 < r<1$ and $r^2<s<1$.  Now, following \cite[proof of Theorem 4]{Wold2005}
we have that the abstract basin of attraction, or the tail space $\wt \Omega$ 
(see \cite{FornaessStensones2004}) associated to $\{F_j\}_{j\in\N}$, is biholomorphic to $\mathbb C^n$, 
and the basin of attraction $\Omega$ of the sequence $\{G_j\}_{j\in\N}$ is biholomorphic to $\wt \Omega$. 
\end{proof}

\emph{Proof of Theorem \ref{th:main2}:} This is an immediate 
consequence of Lemmas \ref{fibaut} and \ref{fb}. 
$\hfill\square$

%
%
%
%
\subsection*{Acknowledgements}
The first named author is supported by the research program P1-0291 and grant J1-9104
from ARRS, Republic of Slovenia, and by the Stefan Bergman Prize 2019 from the 
American Mathematical Society. The second named author is supported by the 
RCN grant 240569 from Norway.



\begin{thebibliography}{10}

\bibitem{FornaessStensones2004}
J.~E. Forn{\ae}ss and B.~Stens{\o}nes.
\newblock Stable manifolds of holomorphic hyperbolic maps.
\newblock {\em Internat. J. Math.}, 15(8):749--758, 2004.

\bibitem{FornaessWold2016}
J.~E. Forn{\ae}ss and E.~F. Wold.
\newblock Non-autonomous basins with uniform bounds are elliptic.
\newblock {\em Proc. Amer. Math. Soc.}, 144(11):4709--4714, 2016.

\bibitem{Forstneric2017E}
F.~Forstneri\v{c}.
\newblock {\em Stein manifolds and holomorphic mappings (The homotopy principle
  in complex analysis)}, volume~56 of {\em Ergebnisse der Mathematik und ihrer
  Grenzgebiete. 3. Folge}.
\newblock Springer, Cham, second edition, 2017.

\bibitem{Forstneric2017Indam}
F.~Forstneri\v{c}.
\newblock Surjective holomorphic maps onto {O}ka manifolds.
\newblock In {\em Complex and symplectic geometry}, volume~21 of {\em Springer
  INdAM Ser.}, pages 73--84. Springer, Cham, 2017.

\bibitem{ForstnericRitter2014}
F.~Forstneri\v{c} and T.~Ritter.
\newblock Oka properties of ball complements.
\newblock {\em Math. Z.}, 277(1-2):325--338, 2014.

\bibitem{Gromov1986}
M.~Gromov.
\newblock {\em Partial differential relations}, volume~9 of {\em Ergebnisse der
  Mathematik und ihrer Grenzgebiete, 3. Folge}.
\newblock Springer-Verlag, Berlin, 1986.

\bibitem{Gromov1989}
M.~Gromov.
\newblock Oka's principle for holomorphic sections of elliptic bundles.
\newblock {\em J. Amer. Math. Soc.}, 2(4):851--897, 1989.

\bibitem{Kusakabe2018Oka}
Y.~{Kusakabe}.
\newblock {Elliptic characterization and localization of {O}ka manifolds}.
\newblock {\em arXiv e-prints}, August 2018.
\newblock \url{https://arxiv.org/abs/1808.06290}. {\em Indiana Univ. Math. J.},
  to appear.

\bibitem{Kusakabe2020complements}
Y.~Kusakabe.
\newblock Oka properties of complements of holomorphically convex sets.
\newblock {\em arXiv e-prints}, May 2020.
\newblock \url{https://arxiv.org/abs/2005.08247}.

\bibitem{Kutzschebauch2005}
F.~Kutzschebauch.
\newblock Anders\'en-{L}empert-theory with parameters: a representation
  theoretic point of view.
\newblock {\em J. Algebra Appl.}, 4(3):325--340, 2005.

\bibitem{RosayRudin1988}
J.-P. Rosay and W.~Rudin.
\newblock Holomorphic maps from {${\mathbb C}^n$} to {${\mathbb C}^n$}.
\newblock {\em Trans. Amer. Math. Soc.}, 310(1):47--86, 1988.

\bibitem{Stout2007}
E.~L. Stout.
\newblock {\em Polynomial convexity}, volume 261 of {\em Progress in
  Mathematics}.
\newblock Birkh\"auser Boston, Inc., Boston, MA, 2007.

\bibitem{Varolin2000}
D.~Varolin.
\newblock The density property for complex manifolds and geometric structures.
  {II}.
\newblock {\em Internat. J. Math.}, 11(6):837--847, 2000.

\bibitem{Varolin2001}
D.~Varolin.
\newblock The density property for complex manifolds and geometric structures.
\newblock {\em J. Geom. Anal.}, 11(1):135--160, 2001.

\bibitem{Wold2005}
E.~F. Wold.
\newblock Fatou-{B}ieberbach domains.
\newblock {\em Internat. J. Math.}, 16(10):1119--1130, 2005.

\end{thebibliography}


\newpage
\noindent Franc Forstneri\v c

\noindent Faculty of Mathematics and Physics, University of Ljubljana, Jadranska 19, SI--1000 Ljubljana, Slovenia

\noindent 
Institute of Mathematics, Physics and Mechanics, Jadranska 19, SI--1000 Ljubljana, Slovenia.

\noindent e-mail: {\tt franc.forstneric@fmf.uni-lj.si}

\vspace*{5mm}
\noindent Erlend Forn\ae ss Wold

\noindent Department of Mathematics, University of Oslo, PO-BOX 1053, Blindern, 0316 Oslo. Norway. 

\noindent e-mail: {\tt erlendfw@math.uio.no}

\end{document}